\documentclass[11pt]{amsart}

\usepackage{graphicx}
\usepackage{subfig}
\usepackage{amsmath,amsthm,amssymb}
\usepackage{esint}
\usepackage{mathrsfs}
\usepackage{appendix}

\usepackage{hyperref}

\usepackage{color}
\usepackage{array}
\usepackage{url}

\usepackage{float}

\newtheorem{theorem}{Theorem}
\newtheorem*{theorem*}{Theorem}

\newtheorem{proposition}[theorem]{Proposition}

\theoremstyle{definition}
\newtheorem{definition}[theorem]{Definition}

\theoremstyle{remark}
\newtheorem{remark}[theorem]{Remark}

\usepackage{mathtools}
\usepackage{appendix}
\usepackage{enumerate}


\usepackage[numbers,sort&compress]{natbib}



\DeclareMathOperator{\Span}{span}

\allowdisplaybreaks

\title[A condition for the absence of anomalous dissipation]{On the absence of anomalous dissipation for the Navier-Stokes equations with Navier boundary conditions: a sufficient condition}

\author[C. Bardos]{Claude Bardos}
\address[C. Bardos]{Laboratoire J.-L. Lions, BP187, 75252 Paris Cedex 05, France; Wolfgang Pauli Institute - Inst. CNRS Pauli IRL 2842 c/o Faculty of Mathematics, University of Vienna, Oskar-Morgenstern-Platz 1, A-1090 Wien, Austria.}
\email{\tt claude.bardos@gmail.com \pagebreak[2] }

\author[D. W. Boutros]{Daniel W. Boutros}
\address[D. W. Boutros]{Department of Applied Mathematics and Theoretical Physics, University of Cambridge, Cambridge CB3 0WA UK.}
\email{\tt dwb42@cam.ac.uk}

\author[E. S. Titi]{Edriss S. Titi}
\address[E. S. Titi]{Department of Applied Mathematics and Theoretical Physics, University of Cambridge, Cambridge CB3 0WA UK; Department of Mathematics, Texas A\&M University, College Station, TX 77843-3368, USA; also Department of Computer Science and Applied Mathematics, Weizmann Institute of Science, Rehovot 76100, Israel. \pagebreak[2] }
\email{\tt Edriss.Titi@maths.cam.ac.uk}
\email{\tt titi@math.tamu.edu}

\keywords{anomalous dissipation, Navier-Stokes equations, Euler equations, inviscid limit, Navier boundary conditions, boundary layer}

\subjclass[2020]{35Q30 (primary), 35D30, 35Q31, 35Q35, 76D05, 76D10 (secondary)}

\date{March 18, 2026}

\begin{document}

\maketitle

\begin{abstract}
We consider the three-dimensional incompressible Navier-Stokes equations in a bounded domain with Navier boundary conditions. We provide a sufficient condition for the absence of anomalous energy dissipation without making assumptions on the behaviour of the corresponding pressure near the boundary or the existence of a strong solution to the incompressible Euler equations with the same initial data. We establish our result by using our recent regularity results for the pressure corresponding to weak solutions of the incompressible Euler equations [Arch. Ration. Mech. Anal., 249 (2025), 28]. 
\end{abstract}

\vspace{8mm}

{\centering \textit{This paper is dedicated to Professor Peter Constantin, on the occasion of his 75th birthday, as a token of friendship and admiration for his contributions to research in partial differential equations and fluid mechanics.} \par}

\bigskip

\section{Introduction}
We consider the incompressible three-dimensional Navier-Stokes equations in a bounded domain $\Omega$ with $\partial \Omega \in C^4$, which, for a given viscosity $\nu > 0$, are described by the system
\begin{equation} \label{navierstokes}
 \partial_t u_\nu + \nabla \cdot(u_\nu\otimes u_\nu)-\nu \Delta u_\nu+ \nabla p_\nu =0, \quad \nabla\cdot u_\nu=0,
\end{equation}
where $u_\nu : \Omega \times [0,T] \rightarrow \mathbb{R}^3$ is the velocity field and $p_\nu : \Omega \times [0,T] \rightarrow \mathbb{R}$ is the pressure, which are the unknowns. Moreover, system \eqref{navierstokes} is supplemented by the following initial data
\begin{equation}
u_\nu \lvert_{t = 0} = u_{\nu,0},
\end{equation}
where $u_{\nu,0} \in L^2 (\Omega)$, with $\nabla \cdot u_{\nu,0}=0$ (in a weak sense). We define $n$ to be the outward normal vector to the boundary $\partial \Omega$. 

On the boundary and for a given well-defined vector field $v \in C^{0,\alpha} (\Omega)$ (for $\alpha \in (0,1)$), we will denote its projection onto the tangential components as follows
\begin{equation}
(v )_\tau \coloneqq \big[ v - (v \cdot n) n \big]_{\partial \Omega}.
\end{equation}
In addition, we will supplement system \eqref{navierstokes} with Navier boundary conditions, which consist of a no-flux (impermeability) boundary condition and a complementary boundary condition $C$ (defined in equation \eqref{navierboundarycondition}, below)
\begin{equation} \label{boundaryconditions}
u_\nu\cdot n =0\quad  \text{ and } \quad  C(u_\nu)=0 \quad \hbox{ on } \partial \Omega.
\end{equation}
Similarly to \cite{bardos2013,gie}, we will consider a generalised (tangential) Navier boundary condition $C$ of the type
\begin{equation} \label{navierboundarycondition}
C(u_\nu) \coloneqq  \left( D(u_\nu) \cdot n \right)_\tau + \lambda (\nu, x) \cdot u_\nu  = 0 \quad \hbox{ on } \partial \Omega,
\end{equation}
where $D(v) \coloneqq \frac{1}{2} [\nabla v + (\nabla v)^T]$ denotes the symmetric part of the gradient of the vector field $v$, (for the sake of clarity) we observe that $D(u_\nu) \cdot n$ denotes the (usual) product between a matrix and a vector field, the tensor-valued function $\lambda : (0,\infty) \times \Omega \rightarrow \mathbb{R}^{3 \times 3}$ is assumed to be a given continuous and nonnegative semidefinite function. This class of boundary conditions includes the Robin boundary conditions or boundary conditions involving the vorticity or the stress tensor (in the latter case see, e.g., \cite{beiraodaveiga2010}). 

Now we recall the identity (for $v \in H^2 (\Omega)$ with $(v \cdot n) \lvert_{\partial \Omega} = 0$)
\begin{equation} \label{vorticityidentity}
(2 D (v) \cdot n - (\nabla \times v) \times n)_\tau = - 2 (D(n) \cdot v)_\tau \quad \hbox{ on } \partial \Omega,
\end{equation}
which holds in $H^{1/2} (\partial \Omega)$ in the sense of trace on the boundary. The proof of identity \eqref{vorticityidentity} can for example be found in \cite[Lemma 3.10]{xiao2013}, see also \cite[Lemma B.1]{gie}. Therefore, thanks to \eqref{vorticityidentity} we can rewrite the Navier boundary condition \eqref{navierboundarycondition} as follows
\begin{equation}
(\omega_\nu \times n)_\tau + 2 \lambda (\nu, x) \cdot u_\nu - 2 \big( D(n) \cdot u_\nu \big)_\tau = 0 \quad \hbox{ on } \partial \Omega,
\end{equation}
where $\omega_\nu = \nabla \times u_\nu$ is the vorticity. If one chooses $\lambda = D(n)$ (which is not necessarily nonnegative semidefinite), the Navier boundary condition \eqref{navierboundarycondition} is equivalent to the following vorticity boundary condition (in three dimensions)
\begin{equation}
\omega_\nu \times n \lvert_{\partial \Omega} = 0.
\end{equation}
In the case of two dimensions one has instead of \eqref{vorticityidentity} the following identity
\begin{equation} \label{vorticityidentity2D}
(D(u_\nu) \cdot n) \cdot \tau = \frac{1}{2} \omega_\nu - \kappa (u_\nu \cdot \tau),
\end{equation}
where $\kappa$ is the curvature of the boundary $\partial \Omega$. The proof of identity \eqref{vorticityidentity2D} can be found in \cite[Lemma 2.1]{clopeau}, as well as \cite[~Collorary 4.2]{kelliher}. Observe that in the two-dimensional case the function $\lambda$ is scalar-valued, therefore if we choose $\lambda = \kappa$ in \eqref{navierboundarycondition} together with \eqref{vorticityidentity2D} we end up with the following boundary condition (in two dimensions)
\begin{equation}
\omega_\nu \lvert_{\partial \Omega} = 0.
\end{equation}

Formally, if we take the vanishing viscosity limit $\nu \rightarrow 0$ of the Navier-Stokes equations \eqref{navierstokes} we obtain the Euler equations
\begin{equation} \label{eulerequations1}
 \partial_t u + \nabla \cdot(u \otimes u ) + \nabla p =0, \quad \nabla\cdot u = 0,
\end{equation}
which is supplemented by the following impermeability boundary condition
\begin{equation} \label{eulerequations2}
(u \cdot n) \lvert_{\partial \Omega} = 0.
\end{equation}
The dissipation anomaly (also known as the Kolmogorov zeroth law of turbulence) is one of the most fundamental properties of turbulent flows and is a crucial feature of the theory of turbulence. In this context, we assert that the Navier-Stokes equations exhibit anomalous dissipation if there is a sequence of Leray-Hopf weak solutions $\{ u_{\nu_k} \}$ of the Navier-Stokes equations (for a sequence of viscosities $\nu_k \rightarrow 0$) such that (cf. Definition \ref{lerayhopfdefinition})
\begin{align*}
\liminf_{\nu_k \rightarrow 0} \bigg[&2 \nu_k \int_0^T \lVert D (u_{\nu_k}) (\cdot, t) \rVert_{L^2 (\Omega)}^2 dt \\
+ &2 \nu_k \sum_{i,j=1}^3 \int_0^T \int_{\partial \Omega} \lambda_{ij} (\nu_k,x) (u_{\nu_k})_i (u_{\nu_k})_j dx dt\bigg] > 0.
\end{align*}

The purpose of this contribution is to establish a sufficient condition for the absence of anomalous dissipation in the case of Navier boundary conditions, with no regularity assumptions on the corresponding pressure (near the boundary). In particular, in order to prove this result we employ recently established regularity results for the pressure for Hölder continuous weak solutions of the Euler equations (cf. \cite{bardos2023}). 

To be more precise, in \cite{bardos2023} it was shown that Hölder regularity propagates from the velocity field to the pressure for weak solutions of the three-dimensional incompressible Euler equations, specifically if the velocity field of a weak solution $u \in L^\infty ((0,T); C^{0,\alpha} (\Omega))$, for $\alpha \in (0,1)$, then the corresponding pressure $p \in L^\infty ((0,T); C^{0,\alpha} (\Omega))$ (after the same result was established in 2D in \cite{bardos2021}, see also \cite{derosa2023,derosa2024,li2024}). 

We will utilise the results from \cite{bardos2023} in this paper to show a sufficient condition for the convergence of Leray-Hopf weak solutions of the Navier-Stokes equations to a weak solution of the incompressible Euler equations and the absence of anomalous energy dissipation. Most importantly, we observe that in \cite{bardos2023} it was established that the boundary condition for the pressure for weak solutions of the Euler equations, which is rigorously derived in \cite{bardos2023} from the weak formulation, is different from the standard classical one for strong solutions. The nonequivalence is further demonstrated by means of an explicitly constructed example in \cite{bardos2023}.

The literature regarding the dissipation anomaly (and the associated Onsager conjecture) as well as the vanishing viscosity limit is very substantial. Consequently, the following overview will therefore be necessarily incomplete in reviewing all the relevant work. In the case of no-slip Dirichlet boundary conditions for the Navier-Stokes equations, the vanishing of the total (viscous) energy dissipation (as the viscosity goes to zero) in a thin boundary layer (the so-called Prandtl-Von Karman layer) is equivalent to the validity of the vanishing viscosity limit. That is, the solutions of the Navier-Stokes equations converge, as the viscosity tends to zero, to the corresponding solution of the Euler equations, under the assumption of the existence of a strong solution of the Euler equations. This result is known as the Kato criterion and was established in \cite{kato} (see also the survey \cite{bardos2013}). 

A related criterion was then proved in \cite{wang}, which only required the vanishing of the part of the energy dissipation due to tangential derivatives of the velocity field (in a slightly thicker boundary layer than the one in \cite{kato}). In \cite{drivas2018,chenkato} the regularity requirements on the limiting background Euler solution were lowered from $C^1(\Omega)$ to $C^{1/3+} (\Omega)$. In addition, under second-order structure function scaling assumptions it was shown in \cite{constantinremarks,drivas2019,bardos2013} that Leray-Hopf weak solutions of the Navier-Stokes equations converge, as the viscosity tends to zero, to (potentially dissipative and weak) solutions of the Euler equations, both for no-slip and Navier boundary conditions.

In \cite{titi2019} (see also \cite{drivas2018}) a sufficient condition for the ruling out of anomalous energy dissipation was established (again in the case of no-slip boundary conditions), which involved assumptions of interior Hölder regularity and continuity of the normal component of the energy flux. Note that these assumptions are consistent with the formation of a boundary layer in the setting of no-slip boundary conditions. We also mention that in \cite{yang} uniform bounds (in viscosity) on the energy dissipation were obtained in the case of nonhomogeneous boundary conditions. 

It is worth stressing that many results have been obtained regarding the vanishing viscosity limit of the Navier-Stokes equations with Navier boundary conditions, see, for example, \cite{aydin,clopeau,beiraodaveiga2010,gie,kelliher,bardos1972,lopes2005,masmoudi2012,maekawa}, and references therein. Furthermore, results on the boundary layer expansion for the case of Navier boundary conditions can be found in \cite{wang2010,iftimie2011}. Finally, we remark again that the study of the vanishing viscosity limit of the Navier-Stokes equations in the case of no-slip boundary conditions has received significant attention in the literature, results in this direction can be found in \cite{sammartino,kukavica2020,constantin2015,maekawa2014,bardos2022,lopes2008,constantin2017,vasseur2024}, and references therein. An overview of results for the no-slip case can be found in \cite{maekawa}.

\section{Preliminary results and weak formulation of the Euler and Navier-Stokes equations}
In this section we will provide precise definitions of the notions of weak solution for the Euler and Navier-Stokes equations, respectively, that we will use in this paper. In particular, we will show the equivalence of two different versions of the weak formulation for the Euler equations. This result will be used later in the proof of a sufficient condition for the convergence of Leray-Hopf weak solutions of the Navier-Stokes equations to a weak solution of the Euler equations.

First we will give a definition of a Leray-Hopf weak solution for the Navier-Stokes equations \eqref{navierstokes} subject to Navier boundary conditions \eqref{boundaryconditions} and \eqref{navierboundarycondition}, which will be based on \cite{kelliher2025} (the definition for the case of Dirichlet boundary conditions can for example be found in \cite{constantinbook}). In the following definition we take into account the fact that for any divergence-free distributional vector field $v$ one has the following equality in the sense of distributions: $\Delta v = 2 \nabla \cdot D(v)$.
\begin{definition} \label{lerayhopfdefinition}
Let $u_\nu \in C_w ([0,T]; L^2 (\Omega)) \cap L^2 ((0,T); H^1 (\Omega))$ be weakly divergence-free such that $(u_\nu \cdot n) \lvert_{\partial \Omega} = 0$ in $L^\infty ((0,T); H^{-1/2} (\partial \Omega))$. Moreover, let $u_{\nu,0} \in L^2 (\Omega)$ also be a weakly divergence-free vector field such that $(u_{\nu,0} \cdot n) \lvert_{\partial \Omega} = 0$. We call $u_\nu$ a Leray-Hopf weak solution of the Navier-Stokes equations \eqref{navierstokes} with Navier boundary conditions \eqref{boundaryconditions} and \eqref{navierboundarycondition} for a given initial datum $u_{\nu,0}$ if it satisfies the following:
\begin{itemize}
    \item For all divergence-free $\Psi \in \mathcal{D} ([0,T) \times \overline{\Omega} ; \mathbb{R}^3)$ with $(\Psi \cdot n) \lvert_{\partial \Omega} = 0$ it holds that (cf. equation (4.6) in \cite{kelliher2025})
    \begin{align}
    &\int_0^T \int_{\Omega} \bigg[u_\nu \cdot \partial_t \Psi + (u_\nu \otimes u_\nu) : \nabla \Psi - 2 \nu D (u_\nu) : \nabla \Psi  \bigg] dx dt \label{weakformulationnavierstokes} \\
    &+ \int_{\Omega} u_{\nu,0} \cdot \Psi (x,0) dx - 2 \nu \int_0^T \int_{\partial \Omega} (\lambda (\nu,x) \cdot u_\nu) \cdot \Psi dx dt = 0. \nonumber
    \end{align}
    \item The velocity vector field $u_\nu$ obeys the following energy inequality (see \cite{bardos2013} and also \cite[equation (4.8)]{kelliher2025})
    \begin{align} 
    &\frac{1}{2} \lVert u_\nu (\cdot, t) \rVert_{L^2}^2 + 2 \nu \int_0^t \lVert D (u_\nu) (\cdot, t') \rVert_{L^2}^2 dt' \nonumber \\
    &+ 2 \nu \sum_{i,j=1}^3 \int_0^t \int_{\partial \Omega} \lambda_{ij} (\nu,x) (u_{\nu})_i (u_{\nu})_j dx dt' \leq \frac{1}{2} \lVert u_{\nu,0} \rVert_{L^2}^2. \label{lerayhopf2}
    \end{align}
    for all $t \in [0,T]$. 
\end{itemize}
\end{definition}
\begin{remark}
We observe that in Definition \ref{lerayhopfdefinition}, the Navier boundary condition \eqref{navierboundarycondition} has been included as part of the weak formulation \eqref{weakformulationnavierstokes} (differently from the case of Dirichlet boundary conditions) rather than being a separate requirement in the definition. By using the $L^2 ((0,T); H^1 (\Omega))$ regularity assumption on the velocity vector field for Leray-Hopf weak solutions (in the sense of Definition \ref{lerayhopfdefinition}), one is still able to interpret equation \eqref{navierboundarycondition} in the sense of trace in $L^2 ((0,T); H^{-1/2} (\partial \Omega))$, see for example \cite[Lemma 2.3 and 2.4]{amrouche}. One can therefore deduce from the weak formulation in equation \eqref{weakformulationnavierstokes} that boundary condition \eqref{navierboundarycondition} holds in the sense of trace. For the purposes of this paper, we will require that the Navier boundary condition \eqref{navierboundarycondition} holds as part of the weak formulation \eqref{weakformulationnavierstokes}, rather than in the sense of trace.
\end{remark}
\begin{remark}
To the best knowledge of the authors, the global existence of Leray-Hopf weak solutions for the incompressible Navier-Stokes equations with Navier boundary conditions of the type \eqref{boundaryconditions}-\eqref{navierboundarycondition} has not been established in the literature to this degree of generality. The case $\lambda \equiv 0$ was treated in \cite{falocchi,watanabe}. In the case $\lambda = \tilde{\lambda} I$ (for some scalar function $\tilde{\lambda} (\nu, x)$), a sketch of the proof of the global existence of Leray-Hopf weak solutions was given in \cite{iftimie2011}. A full proof of this case was given in \cite{kelliher2025}. Results in the 2D setting can be found in \cite{clopeau}. For the sake of completeness, in Appendix \ref{lerayhopfappendix} we will provide an outline of the proof of the global existence of Leray-Hopf weak solutions for the boundary conditions \eqref{boundaryconditions}-\eqref{navierboundarycondition}, by following the approach in \cite{kelliher2025} (see also \cite{gie}).
\end{remark}
We will also provide a definition of a weak solution to the incompressible Euler equations.
\begin{definition} \label{eulerweaksolutiondefinition}
Let $\alpha \in (0,1)$. We call a pair $(u,p) \in C([0,T]; C^{0,\alpha} (\Omega)) \times L^\infty ((0,T); C^{0,\alpha} (\Omega))$ a weak solution of the incompressible Euler equations \eqref{eulerequations1}-\eqref{eulerequations2} with initial data $u_0 \in C^{0,\alpha} (\Omega)$ (which we assume to be weakly divergence-free), if for all $\phi \in \mathcal{D} (\Omega \times [0,T); \mathbb{R}^3)$ and $\varphi \in \mathcal{D} (\Omega \times (0,T); \mathbb{R})$ we have
\begin{align}
&\int_0^T \int_\Omega \bigg[u \cdot \partial_t \phi + (u \otimes u) : \nabla \phi + p \nabla \cdot \phi \bigg] dx dt + \int_{\Omega} u_0 \cdot \phi (x,0) dx = 0, \label{eulerweakformulation} \\ 
&\int_0^T \int_\Omega u \cdot \nabla \varphi dx dt = 0.
\end{align}
Moreover, we require that $(u \cdot n) \lvert_{\partial \Omega} = 0$ in $C([0,T]; H^{-1/2} (\partial \Omega))$ and that $\partial_n (p + (u \cdot n)^2) \lvert_{\partial \Omega} = (u \otimes u) : \nabla n$ in $L^\infty ((0,T); H^{-2} (\partial \Omega))$.
\end{definition}
We notice that, by restricting to divergence-free test functions $\phi$ in the weak formulation \eqref{eulerweakformulation}, the contribution from the pressure term vanishes. In the next proposition we will show that the above definition is equivalent to an alternative definition of weak solutions of the Euler equations as it is stated in Proposition \ref{weakformulationequivalence}.
\begin{proposition} \label{weakformulationequivalence}
Let $\alpha \in (0,1)$, and let $u \in C([0,T];C^{0,\alpha} (\Omega))$ be weakly divergence-free and satisfy $(u \cdot n) \lvert_{\partial \Omega} = 0$ as well as $ u \lvert_{t = 0} = u_0$ (where $u_0 \in C^{0,\alpha} (\Omega)$ is divergence-free). Then the weak formulation \eqref{eulerweakformulation} is equivalent to the condition that $u$ satisfies
\begin{equation}
\int_0^T \int_\Omega \bigg[u \cdot \partial_t \psi + (u \otimes u) : \nabla \psi \bigg] dx dt + \int_{\Omega} u_0 \cdot \psi (x,0) dx = 0, \label{eulerweakformulationdivergencefree}
\end{equation}
for all divergence-free $\psi \in \mathcal{D} (\Omega \times [0,T); \mathbb{R}^3)$. 

To be more precise, we have: 
\begin{enumerate}[(i)]
    \item If $(u,p)$ is a weak solution of the Euler equations in the sense of Definition \ref{eulerweaksolutiondefinition}, then $u$ satisfies weak formulation \eqref{eulerweakformulationdivergencefree}.
    \item If $u$ satisfies weak formulation \eqref{eulerweakformulationdivergencefree} and the assumptions mentioned above, then there exists a pressure $p \in L^\infty ((0,T);C^{0,\alpha} (\Omega))$ such that $(u,p)$ is a weak solution of the Euler equations in the sense of Definition \ref{eulerweaksolutiondefinition} and satisfies weak formulation \eqref{eulerweakformulation}.
\end{enumerate}
\end{proposition}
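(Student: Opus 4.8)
The plan is to prove the two implications separately, with the forward direction (i) being essentially immediate and the substantive content residing in (ii).

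For part (i), suppose $(u,p)$ is a weak solution in the sense of Definition \ref{eulerweaksolutiondefinition}. Given any divergence-free $\psi \in \mathcal{D}(\Omega \times [0,T);\mathbb{R}^3)$, I would simply use $\psi$ as the test function $\phi$ in the weak formulation \eqref{eulerweakformulation}. Since $\nabla \cdot \psi = 0$, the pressure term $\int_0^T \int_\Omega p \nabla \cdot \psi \, dx\, dt$ vanishes identically, and what remains is exactly \eqref{eulerweakformulationdivergencefree}. No regularity of $p$ beyond $L^\infty_t L^\infty_x$ (to make the dropped term well-defined) is needed here.

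For part (ii), the task is to reconstruct a pressure from the velocity. The starting point is that the linear functional
\[
\phi \mapsto -\int_0^T \int_\Omega \Big[ u \cdot \partial_t \phi + (u \otimes u) : \nabla \phi \Big] dx\, dt - \int_\Omega u_0 \cdot \phi(x,0)\, dx
\]
vanishes on all divergence-free test functions $\phi$, by \eqref{eulerweakformulationdivergencefree}. A de Rham-type argument then produces a distribution $p$ (a priori only a distribution in $x$, depending on $t$) such that this functional equals $\int_0^T \int_\Omega p \, \nabla \cdot \phi \, dx\, dt$ for all $\phi \in \mathcal{D}(\Omega \times [0,T);\mathbb{R}^3)$; this is \eqref{eulerweakformulation}. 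Taking $\phi(x,t) = \varphi(x,t) e_j$ shows $p$ solves, in the sense of distributions, $-\Delta p = \nabla \cdot \nabla \cdot (u \otimes u) = \sum_{i,j}\partial_i \partial_j (u_i u_j)$ inside $\Omega$, since $u$ is divergence-free. At this point I would invoke the regularity theory cited from \cite{bardos2023}: because $u \in L^\infty((0,T);C^{0,\alpha}(\Omega))$, the propagation-of-Hölder-regularity result gives $p \in L^\infty((0,T);C^{0,\alpha}(\Omega))$, with the particular boundary condition $\partial_n(p + (u\cdot n)^2)\lvert_{\partial\Omega} = (u \otimes u):\nabla n$ in $L^\infty((0,T);H^{-2}(\partial\Omega))$ being precisely the one derived there from the weak formulation. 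One then checks that $(u,p)$ meets all the requirements of Definition \ref{eulerweaksolutiondefinition}: the momentum identity \eqref{eulerweakformulation} holds by construction, the divergence-free condition and impermeability are assumed, and the pressure regularity and its boundary condition come from \cite{bardos2023}.

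The main obstacle is the de Rham step carried out with the correct dependence on $t$ and with enough regularity to identify $p$ as a genuine function rather than a mere distribution: one must ensure the reconstructed $p$ is measurable in $t$ with values in a suitable space, that the spatial distribution is represented by the solution of the pressure Poisson problem with the boundary condition of \cite{bardos2023} (rather than the classical Neumann condition), and that $\nabla p$ has no spurious time-derivative or harmonic ambiguity beyond an additive function of $t$ (which is irrelevant since only $\nabla \cdot \phi$ is tested, and $\phi$ is compactly supported in $\Omega$, so constants in $x$ integrate to zero). I expect the cleanest route is to solve the pressure Poisson equation directly with the data $\nabla \cdot \nabla \cdot (u \otimes u)$ and the boundary condition from \cite{bardos2023}, obtain $p \in L^\infty((0,T);C^{0,\alpha}(\Omega))$ from that reference, and then verify a posteriori that this $p$ makes \eqref{eulerweakformulation} hold using \eqref{eulerweakformulationdivergencefree} together with the Helmholtz–Leray decomposition of an arbitrary (not necessarily divergence-free) test function $\phi$.
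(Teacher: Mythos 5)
Your proposal is correct and, in the ``cleanest route'' you settle on at the end, coincides with the paper's own proof: part (i) is the same trivial restriction to divergence-free test functions, and for part (ii) the pressure is defined directly as the solution of the Poisson problem with the non-classical boundary condition of \cite{bardos2023} (yielding $p\in L^\infty((0,T);C^{0,\alpha}(\Omega))$), after which \eqref{eulerweakformulation} is verified for an arbitrary test function via its Helmholtz--Leray decomposition $\phi=\psi+\nabla\chi$, the gradient part being absorbed by the weak formulation of the pressure equation and the harmonic ambiguity removed by the mean-free normalisation. The preliminary de Rham discussion is superfluous once that route is taken, exactly as you anticipate.
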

\begin{proof}
The proof of part (i) follows immediately by choosing divergence-free test functions $\phi$, which will play the role of $\psi$, in weak formulation \eqref{eulerweakformulation}.

In order to prove part (ii), we first introduce the pressure $p$, which corresponds to $u$, as the unique solution of the following elliptic boundary-value problem
\begin{equation} \label{pressureboundaryvalueproblem}
\begin{cases}
- \Delta p = (\nabla \otimes \nabla) : (u \otimes u) \quad \text{in } \Omega, \\
\partial_n (p + (u \cdot n)^2 ) = (u \otimes u) : \nabla n \quad \text{on } \partial \Omega.
\end{cases}
\end{equation}
Moreover, we require the pressure to be mean-free over $\Omega$ to guarantee uniqueness. 

As it has been mentioned already, the pressure boundary condition given in \eqref{pressureboundaryvalueproblem} (which was rigorously derived in \cite{bardos2023}) differs from the canonical one for strong solutions of the Euler equations and can be shown to be compulsory, in particular when $\alpha \in (0,1/2]$, by means of an explicitly constructed example in \cite{bardos2023} of an incompressible flow. 

It was shown in \cite{bardos2023} that there exists a unique solution $p \in L^\infty ((0,T); C^{0,\alpha} (\Omega))$  (for any given $u \in L^\infty ((0,T); C^{0,\alpha} (\Omega))$) to the boundary-value problem \eqref{pressureboundaryvalueproblem} which satisfies the following weak formulation for the pressure (see \cite[Theorem 1.5 and Definition 3.2]{bardos2023})
\begin{equation} \label{pressuredefinition}
\int_0^T \int_{\Omega} p \Delta \chi dx dt = - \int_0^T \int_{\Omega} (u \otimes u) : (\nabla \otimes \nabla) \chi dx dt,
\end{equation}
for all $\chi \in \mathcal{D} (\Omega \times (0,T); \mathbb{R})$. 

Let $\phi \in \mathcal{D} (\Omega \times [0,T); \mathbb{R}^3)$ be arbitrary, by the Leray-Helmholtz decomposition (see \cite[Theorem IV.3.5]{boyer}) we know that there exist $\psi \in C ([0,T];L^2 (\Omega))$ (such that $(\psi \cdot n)\lvert_{\partial \Omega} = 0$ and $\psi$ is weakly divergence-free) and $\chi \in C ([0,T]; H^1 (\Omega))$ such that
\begin{equation} \label{lerayhelmholtz}
\phi = \psi + \nabla \chi.
\end{equation}
By taking the divergence and the normal component at the boundary, respectively, of equation \eqref{lerayhelmholtz}, one arrives at the following boundary-value problem for $\chi$
\begin{equation} \label{gradientboundaryvalueproblem}
\begin{cases}
\Delta \chi = \nabla \cdot \phi \quad \text{in } \Omega, \\
\partial_n \chi = 0 \quad \text{on } \partial \Omega.
\end{cases}
\end{equation}
Now by using elliptic regularity estimates (see for example \cite[Theorem 1.10]{girault} and \cite{nardi}), it follows that $\chi \in C([0,T]; C^3 (\Omega))$ and hence also $\psi \in C([0,T]; C^2 (\Omega))$.

Inserting the test function $\phi$ into weak formulation \eqref{eulerweakformulation} leads to
\begin{align*}
&\int_0^T \int_\Omega \bigg[u \cdot \partial_t \phi + (u \otimes u) : \nabla \phi + p \nabla \cdot \phi \bigg] dx dt + \int_{\Omega} u_0 \cdot \phi (x,0) dx \\
&= \int_0^T \int_\Omega \bigg[u \cdot \partial_t \psi + (u \otimes u) : \nabla \psi \bigg] dx dt + \int_{\Omega} u_0 \cdot \psi (x,0) dx \\
&+ \int_0^T \int_{\Omega} \bigg[p \Delta \chi + (u \otimes u) : (\nabla \otimes \nabla) \chi \bigg] dx dt = 0,
\end{align*}
where we have used above that $\psi$, $u$ and $u_0$ are (weakly) divergence-free. The first group of terms on the right-hand side is zero because we assumed in equation \eqref{eulerweakformulationdivergencefree} that the weak formulation of the Euler equations holds for divergence-free test functions. The second group of terms vanishes because of the weak formulation for the pressure in equation \eqref{pressuredefinition}. Therefore $(u,p)$ satisfies weak formulation \eqref{eulerweakformulation}.

By Theorem 3.4 (and also Theorem 2.4) in \cite{bardos2023} it then follows that the boundary condition $\partial_n \big(p + (u \cdot n)^2 \big) \lvert_{\partial \Omega} = (u \otimes u) : \nabla n$ holds in $L^\infty ((0,T);$ $H^{-2} (\partial \Omega))$. By proceeding similarly as in the proof of Theorem 3.7 in \cite{bardos2023} one can also derive a weak formulation from \eqref{eulerweakformulation} for the case of test functions $\phi$ which do not vanish at the boundary.

It remains to show that the pressure, which satisfies \eqref{pressureboundaryvalueproblem} or \eqref{pressuredefinition}, is unique. If there would exist another mean-free pressure $\tilde{p} \in \mathcal{D}' (\Omega \times (0,T))$ one can deduce from \eqref{pressuredefinition} and \eqref{gradientboundaryvalueproblem} that
\begin{equation} \label{pressuredifference}
\langle p - \tilde{p}, \nabla \cdot \phi \rangle_{x,t} = 0,
\end{equation}
where $\langle \cdot, \cdot \rangle_{x,t}$ denotes the spatial and temporal distributional duality bracket. Therefore, \eqref{pressuredifference} implies that  $\nabla(p-\tilde{p})=0$ and consequently $\Delta (p-\tilde{p})=0$. Since the mean of $(p-\tilde{p})$ is zero we conclude that $p - \tilde{p} = 0$ and hence the pressure is unique. Therefore $(u,p)$ is a weak solution of the Euler equations in the sense of Definition \ref{eulerweaksolutiondefinition} and this completes the proof of part (ii) of the proposition.
\end{proof}
\section{Main result and its proof} \label{mainresultsection}
Before stating our main result, let us first recall the definition of the set
\begin{align}
\mathcal{H} &\coloneqq \{ \psi \in C^\infty_c (\overline{\Omega}; \mathbb{R}^3) \; \lvert \; \nabla \cdot \psi = 0, \; (\psi \cdot n) \lvert_{\partial \Omega} = 0 \}.
\end{align}
The function spaces $H$ and $W$ are then defined as the closure of the set $\mathcal{H}$ with respect to the $L^2 (\Omega)$ and the $H^1 (\Omega)$ topologies, respectively. We define $W'$ to be the dual of the Hilbert space $W$, where $W$ is endowed with the $H^1(\Omega)$ topology.
\begin{theorem} \label{absenceanomalousdissipation}
Let $\Omega$ be an open set such that $\partial \Omega \in C^4$ and $\alpha \in (0,1)$, and let $\{ u_{\nu_k} \}$ be a sequence of Leray-Hopf weak solutions of the Navier-Stokes equations \eqref{navierstokes} (for a sequence of viscosities $\nu_k \rightarrow 0$) which obey the Navier boundary conditions \eqref{boundaryconditions}-\eqref{navierboundarycondition} such that the corresponding initial data $u_{\nu_k,0}$ converge strongly in $C^{0,\alpha} (\Omega)$ to $u_0$ (as $\nu_k \rightarrow 0$). Moreover, assume that the family of solutions $\{ u_{\nu_k} \}$ satisfies the following bounds (uniformly as $\nu_k \rightarrow 0$)
\begin{equation} \label{Hölderbound}
   \sup_{k \in \mathbb{N} }
    \|u_{\nu_k} \| _{L^\infty ((0,T); C^{0,\alpha} (\Omega))} + \sup_{k \in \mathbb{N} }
    \nu_k^{1/2} \|u_{\nu_k} \| _{L^2 ((0,T); H^1 (\Omega))} <\infty.
\end{equation}
Then any weak-$*$ limit $u \in L^\infty ((0,T); L^2 (\Omega))$ of any subsequence of $\{ u_{\nu_k} \}$ is a weak solution of the Euler equations (in the sense of Definition \ref{eulerweaksolutiondefinition}), with initial data $u_0$, which belongs to $L^\infty((0,T);C^{0,\alpha} (\Omega))$ and also satisfies the impermeability boundary condition $(u \cdot n)\lvert_{\partial \Omega} = 0$.

Moreover, if $\alpha \in \left(\frac{1}{3},1 \right)$ the weak-$*$ limit $u$ above conserves energy so that
\begin{equation}
\| u (t) \|^2_{L^2(\Omega)} = \| u_0 \|^2_{L^2(\Omega)},
\end{equation}
for almost every $t \in (0,T)$. Furthermore, in this case there is no anomalous dissipation of energy for any subsequence $\{ u_{\nu_k} \}$ which converges weakly-$*$ to $u$ in $L^\infty ((0,T); L^2 (\Omega))$ (again with initial data $u_0$). That is, it then holds that (cf. \cite{bardos2013})
\begin{align} \label{vanishingdissipationnavier}
\lim_{\nu_k\rightarrow 0}  \bigg[ &2 \nu_k \int_0^T\| D (u_{\nu_k}) (t) \|^2_{L^2(\Omega)} dt \\
+ &2 \nu_k \sum_{i,j=1}^3 \int_0^T \int_{\partial \Omega} \lambda_{ij} (\nu_k,x) (u_{\nu_k})_i (u_{\nu_k})_j dx dt \bigg] = 0. \nonumber
\end{align}
Finally, there exists at least one subsequence $\{ u_{\nu_k} \}$ such that  $u_{\nu_k} \rightarrow u$ in $C ([0,T]; L^2 (\Omega))$, where the limit $u$ has the aforementioned properties.
\end{theorem}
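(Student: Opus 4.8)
The plan is to run a compactness argument in the Hölder class, pass to the limit in the weak formulation \eqref{weakformulationnavierstokes}, identify the limit as a weak solution of the Euler equations via Proposition~\ref{weakformulationequivalence}, and then combine the energy inequality \eqref{lerayhopf2} with an Onsager-type energy-conservation statement for the limiting flow. By \eqref{Hölderbound} the family $\{u_{\nu_k}\}$ is bounded in $L^\infty((0,T);C^{0,\alpha}(\Omega))$, hence in $L^\infty((0,T);L^2(\Omega))$ since $\Omega$ is bounded. Testing \eqref{weakformulationnavierstokes} with $\Psi(x,t)=\eta(t)\psi(x)$, $\eta\in\mathcal D((0,T))$ and $\psi\in\mathcal H$, one bounds $\partial_t u_{\nu_k}$ in $L^2((0,T);W')$: the convective term $(u_{\nu_k}\otimes u_{\nu_k}):\nabla\psi$ is controlled by the uniform $L^\infty$-bound, while the viscous term $2\nu_k D(u_{\nu_k}):\nabla\psi$ and the Navier boundary term are $O(\nu_k^{1/2})$ as $\nu_k\to0$, using $2\nu_k\|D(u_{\nu_k})\|_{L^2((0,T);L^2)}\le 2\nu_k^{1/2}\big(\nu_k^{1/2}\|u_{\nu_k}\|_{L^2((0,T);H^1)}\big)$, the bound \eqref{Hölderbound}, and the trace theorem for the boundary integral. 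Since $C^{0,\alpha}(\overline\Omega)$ embeds compactly into $C^{0,\beta}(\overline\Omega)$ for $0<\beta<\alpha$, which embeds continuously into $L^2(\Omega)$ and then into $W'$, the Aubin--Lions--Simon lemma produces a subsequence (not relabelled) with $u_{\nu_k}\to u$ in $C([0,T];L^2(\Omega))$; this is already the last assertion of the theorem. A standard interpolation of this convergence against \eqref{Hölderbound} upgrades it to $u_{\nu_k}\to u$ in $C([0,T];C^{0,\beta}(\overline\Omega))$ for every $\beta<\alpha$; lower semicontinuity gives $u\in L^\infty((0,T);C^{0,\alpha}(\Omega))$; the impermeability condition $(u\cdot n)|_{\partial\Omega}=0$ passes to the limit from the (now classical) traces of $u_{\nu_k}$; and $u(0)=\lim_k u_{\nu_k,0}=u_0$. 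An arbitrary weak-$*$ limit of an arbitrary subsequence must coincide with the strong limit obtained by applying this argument to that subsequence, so no generality is lost.

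\emph{Passage to the limit.} For fixed divergence-free $\Psi\in\mathcal D([0,T)\times\overline\Omega;\mathbb R^3)$ with $(\Psi\cdot n)|_{\partial\Omega}=0$, pass to the limit in \eqref{weakformulationnavierstokes}: the strong convergence in $C([0,T];L^2(\Omega))$ together with the $L^\infty$-bound gives $u_{\nu_k}\otimes u_{\nu_k}\to u\otimes u$ in $C([0,T];L^1(\Omega))$, so the nonlinear term converges; the viscous and Navier boundary terms vanish by the estimates above (alternatively, the boundary term is discarded using \eqref{lerayhopf2}, the nonnegativity of $\lambda$, and Cauchy--Schwarz); and $\int_\Omega u_{\nu_k,0}\cdot\Psi(\cdot,0)\to\int_\Omega u_0\cdot\Psi(\cdot,0)$. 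Restricting in particular to divergence-free $\psi\in\mathcal D(\Omega\times[0,T);\mathbb R^3)$, the limit $u$ satisfies \eqref{eulerweakformulationdivergencefree}, so by Proposition~\ref{weakformulationequivalence}(ii) there exists $p\in L^\infty((0,T);C^{0,\alpha}(\Omega))$ for which $(u,p)$ is a weak solution of the Euler equations in the sense of Definition~\ref{eulerweaksolutiondefinition}, including the nonstandard boundary condition $\partial_n\big(p+(u\cdot n)^2\big)|_{\partial\Omega}=(u\otimes u):\nabla n$. This settles the first part of the theorem.

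\emph{Energy conservation and absence of anomalous dissipation.} When $\alpha>\tfrac{1}{3}$, we invoke energy conservation for weak solutions of the Euler equations in a bounded domain: a Constantin--E--Titi commutator estimate, whose boundary contributions are controlled by the $C^{0,\alpha}$-regularity of $p$ and the boundary condition $\partial_n p|_{\partial\Omega}=(u\otimes u):\nabla n$ established in \cite{bardos2023} (cf. also \cite{bardos2021}), gives $\|u(t)\|_{L^2(\Omega)}^2=\|u_0\|_{L^2(\Omega)}^2$ for a.e. $t$, hence for every $t$ by the $C([0,T];L^2)$-continuity of $u$. Evaluating \eqref{lerayhopf2} at $t=T$ along the strongly convergent subsequence, and using $\|u_{\nu_k,0}\|_{L^2}\to\|u_0\|_{L^2}$ (from the $C^{0,\alpha}$-convergence of the data) together with $\|u_{\nu_k}(T)\|_{L^2}\to\|u(T)\|_{L^2}=\|u_0\|_{L^2}$, one finds that the $\limsup$ as $\nu_k\to0$ of the bracketed quantity in \eqref{vanishingdissipationnavier} is at most $\tfrac{1}{2}\|u_0\|_{L^2}^2-\tfrac{1}{2}\|u(T)\|_{L^2}^2=0$; since both dissipation terms are nonnegative (the boundary one because $\lambda$ is nonnegative semidefinite), the limit equals $0$, which is \eqref{vanishingdissipationnavier}. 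A routine subsequence-of-a-subsequence argument (each subsequence admits a further subsequence along which one re-extracts strong $C([0,T];L^2)$-convergence to $u$, hence vanishing dissipation) then extends \eqref{vanishingdissipationnavier} to every subsequence converging weakly-$*$ to $u$.

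\emph{The main obstacle.} The crux is not the compactness but the fact that the previous two paragraphs rely essentially on the pressure theory of \cite{bardos2023}: one may not use the classical Euler pressure or its classical Neumann boundary condition, since for $\alpha\le\tfrac{1}{2}$ these are provably the wrong objects, and both the construction of $p$ via Proposition~\ref{weakformulationequivalence} and the control of the boundary terms in the energy-conservation step genuinely require the rigorously derived pressure together with its nonstandard boundary condition. A secondary point is the precise time-regularity of $u$: the compactness delivers only $u\in C([0,T];C^{0,\beta})$ for $\beta<\alpha$ alongside $u\in L^\infty((0,T);C^{0,\alpha})$, and one should verify that this suffices for Definition~\ref{eulerweaksolutiondefinition} and for the applicability of the results of \cite{bardos2023}.
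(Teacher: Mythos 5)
Your proposal is correct and follows essentially the same route as the paper: Aubin--Lions compactness from the uniform H\"older bound and the $W'$ bound on $\partial_t u_{\nu_k}$, passage to the limit in \eqref{weakformulationnavierstokes}, identification of the pressure via Proposition \ref{weakformulationequivalence}(ii), the Onsager-type energy conservation result of \cite{bardos2023} for $\alpha>\tfrac13$, and the energy inequality \eqref{lerayhopf2} to kill the dissipation. The only cosmetic difference is that you use strong convergence of $u_{\nu_k}(T)$ plus a subsequence-of-subsequences argument where the paper uses $u_{\nu_k}(\cdot,T)\rightharpoonup u(\cdot,T)$ and weak lower semicontinuity of the $L^2$ norm directly for an arbitrary weakly-$*$ convergent subsequence; both are valid.
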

\begin{proof}
We first show that any weak-$*$ limit $u \in L^\infty ((0,T); L^2 (\Omega))$ is a weak solution of the Euler equations (in the sense of Definition \ref{eulerweaksolutiondefinition}). Let $\{ u_{\nu_k} \}$ be a sequence converging weak-$*$ to $u$ (where we have passed to a subsequence if necessary and we have relabelled the sequence again by $\nu_k$). By using the assumed uniform boundedness \eqref{Hölderbound} of the Leray-Hopf solutions of the Navier-Stokes equations in $L^\infty ((0,T); C^{0,\alpha} (\Omega)) \cap L^\infty ((0,T); H)$, we find from equation \eqref{weakformulationnavierstokes} that $\{ \partial_t u_{\nu_k} \}$ is uniformly bounded in $L^\infty ((0,T); W')$. 

Then by applying the Aubin-Lions compactness lemma (see \cite[Theorem II.5.16]{boyer}) we deduce that there exists a subsequence $\{ u_{\nu_k} \}$ with a limit $\overline{u}$ such that $u_{\nu_k} \rightarrow \overline{u}$ in $C ([0,T]; H)$ (which allows one to make sense of the initial data). Moreover, we also have $u_{\nu_k} \rightarrow \overline{u}$ in $L^\infty ((0,T); C^{0,\beta} (\Omega))$ for any $\beta < \alpha$. The limit $\overline{u}$ belongs to $L^{\infty}((0,T); C^{0,\alpha} (\Omega))$. By the uniqueness of weak-$*$ limits (along the new subsequence) we conclude that $u = \overline{u}$ (as elements in $L^\infty ((0,T); L^2 (\Omega))$).

By passing to the limit in the weak formulation \eqref{weakformulationnavierstokes} for the Navier-Stokes equations, we find that the limit $u$ satisfies (for all $\psi \in \mathcal{D} (\Omega \times [0,T); \mathbb{R}^3) \cap L^\infty ((0,T); H)$ and $\varphi \in \mathcal{D} (\Omega \times (0,T); \mathbb{R})$)
\begin{align} 
&\int_0^T \int_\Omega \bigg[u \cdot \partial_t \psi + (u \otimes u) : \nabla \psi \bigg] dx dt + \int_{\Omega} u_0 \cdot \psi (x,0) dx = 0, \label{weakformulationdivergencefree} \\ 
&\int_0^T \int_\Omega u \cdot \nabla \varphi dx dt = 0. \nonumber
\end{align}
By the readily established regularity of $u$ and a density argument we deduce that the weak formulation \eqref{weakformulationdivergencefree} still holds for $\psi \in C^1_c ((0,T); W)$. By applying Proposition \ref{weakformulationequivalence} we therefore conclude that $u$ (and its associated pressure $p \in L^\infty ((0,T); C^{0,\alpha} (\Omega))$) is a weak solution of the Euler equations in the sense of Definition \ref{eulerweaksolutiondefinition}. 

Therefore we conclude that any weak-$*$ limit $u \in L^\infty ((0,T); C^{0,\alpha} (\Omega))$ is a weak solution of the Euler equations. Thanks to \eqref{Hölderbound} we know that by the Banach-Alaoglu theorem there exists at least one such weak-$*$ limit which is a weak solution of the Euler equations (in the sense of Definition \ref{eulerweaksolutiondefinition}).

In the case $\alpha > \frac{1}{3}$, by the results in \cite{titi2018,titi2019,bardos2023,drivas2018} (specifically Theorem 1.13 in \cite{bardos2023}) on the `conservation of energy part' in the Onsager conjecture in bounded domains, we conclude that any such weak-$*$ limit $u$ conserves energy, i.e. for almost all $t \in (0,T)$ we have
\begin{equation}
\| u (\cdot, t) \|^2_{L^2(\Omega)} = \|u_0\|^2_{L^2(\Omega)}.
\end{equation}
Now for any subsequence $\{ u_{\nu_k} \}$ such that $u_{\nu_k} \overset{\ast}{\rightharpoonup} u$ in $L^\infty ((0,T);L^2 (\Omega))$ (again for the case $\alpha > \frac{1}{3}$) we show that there is no anomalous dissipation of energy (by adapting an argument from \cite{bardos2013}). By using the weak continuity in time of Leray-Hopf weak solutions $u_{\nu_k}$, as well as the (strong) temporal continuity of the limiting weak solution $u$ of the Euler equations, it follows that for every $t \in [0,T]$ we have $u_{\nu_k} (\cdot, t) \rightharpoonup u (\cdot, t)$ in $L^2 (\Omega)$. By virtue of the energy inequality \eqref{lerayhopf2} we therefore have
\begin{align*}
&\limsup_{\nu_k \rightarrow 0} \bigg[2 \nu_k \int_0^{T} \lVert D (u_{\nu_k}) (\cdot, t) \rVert_{L^2}^2 dt + 2 \nu_k \sum_{i,j=1}^3 \int_0^{T} \int_{\partial \Omega} \lambda_{ij} (\nu_k,x) (u_{\nu_k})_i (u_{\nu_k})_j dx dt \bigg] \\
&\leq \frac{1}{2} \limsup_{\nu_k \rightarrow 0} \bigg[ \lVert u_{\nu_k,0} \rVert_{L^2}^2 - \lVert u_{\nu_k} (\cdot, T) \rVert_{L^2}^2 \bigg] \leq \frac{1}{2} \bigg[ \lVert u_0 \rVert_{L^2}^2 - \lVert u (\cdot, T) \rVert_{L^2}^2 \bigg] = 0,
\end{align*}
where we have used above the facts that $u_{\nu_k,0}$ converges strongly to $u_0$ in $C^{0,\alpha} (\Omega)$ and that the limit $u$ conserves energy. Consequently, the above proves the vanishing of the viscous dissipation as given in equation \eqref{vanishingdissipationnavier}.
\end{proof}
\begin{remark}
We observe that Theorem \ref{absenceanomalousdissipation} does not assume the existence of a regular solution (with a Lipschitz vorticity) of the Euler equations. We emphasise that the assumptions of Theorem \ref{absenceanomalousdissipation} in general are not compatible with the no-slip boundary condition for the Navier-Stokes equations, which can generate a Prandtl-type boundary layer. On the other hand it is fully compatible when $u\mapsto C(u)$ is a boundary condition which does not generate a discontinuity at the boundary on the level of the velocity, i.e. such that the boundary layer is much weaker. Examples of such boundary conditions are the aforementioned vorticity or stress tensor boundary conditions. 
\end{remark}
\begin{remark}
We note that in order to rule out anomalous dissipation (and ensure that the limiting solution conserves energy) the uniform bound \eqref{Hölderbound} can be weakened to the following two assumptions (cf. \cite{titi2019,bardos2023}): 
\begin{enumerate}[(i)]
    \item There exists a $\beta > 0$ such that
    \begin{equation} 
    \sup_{k \in \mathbb{N} }
    \|u_{\nu_k} \| _{L^\infty ((0,T); C^{0,\beta} (\Omega))} <\infty.
    \end{equation}
    \item For every $\widetilde{\Omega} \subset \subset \Omega$ there exists an exponent $\alpha (\widetilde{\Omega}) > \frac{1}{3}$ such that
    \begin{equation} 
    \sup_{k \in \mathbb{N} }
    \|u_{\nu_k} \| _{L^\infty ((0,T); C^{0,\alpha (\widetilde{\Omega})} (\widetilde{\Omega}))} <\infty.
    \end{equation}
\end{enumerate}
These assumptions (together with the Leray-Hopf regularity, as given in Definition \ref{lerayhopfdefinition}) imply the absence of anomalous dissipation.
\end{remark}
\section{Concluding remarks}

In this contribution we have established a sufficient condition for the validity of the vanishing viscosity limit of the Navier-Stokes equations \eqref{navierstokes} with Navier boundary conditions \eqref{boundaryconditions}-\eqref{navierboundarycondition}. Moreover, we have established a sufficient regularity condition for the absence of anomalous energy dissipation. In particular, in contrast to previous works the regularity assumptions in Theorem \ref{absenceanomalousdissipation} do not involve any regularity hypotheses on the pressure. In addition, we have not assumed the existence of a strong solution for the Euler equations for the corresponding initial condition. 

\subsection*{Acknowledgements}
C.B. would like to acknowledge the warm and kind hospitality of the Department of Applied Mathematics and Theoretical Physics, University of Cambridge, where part of this work was completed. D.W.B. would like to acknowledge support from the Cambridge Trust and the Cantab Capital Institute for Mathematics of Information. D.W.B. and E.S.T. have benefitted from the inspiring environment of the CRC 1114 ``Scaling Cascades in Complex Systems'', Project Number 235221301, Project C09, funded by the Deutsche Forschungsgemeinschaft (DFG). Moreover, this work was also supported in part by the DFG Research Unit FOR 5528 on
Geophysical Flows.

\begin{appendices}
\section{Global existence of Leray-Hopf weak solutions for generalised Navier boundary conditions} \label{lerayhopfappendix}
In this appendix, we will provide a sketch of the proof of the global existence of Leray-Hopf weak solutions for the case of the generalised Navier boundary conditions \eqref{boundaryconditions} and \eqref{navierboundarycondition}. The proof proceeds in a similar fashion as in \cite{kelliher2025}. We first state the result in the following theorem.
\begin{theorem} \label{lerayhopfexistencethm}
Let $u_{\nu,0} \in L^2 (\Omega)$ be a divergence-free vector field such that $(u_{\nu,0} \cdot n) \lvert_{\partial \Omega} = 0$, and assume that $\lambda_{ij} \in C^0 (\partial \Omega)$ for $i,j=1,2,3$. Then there exists a Leray-Hopf weak solution $u$ (in the sense of Definition \ref{lerayhopfdefinition}) to the Navier-Stokes equations \eqref{navierstokes} with the generalised Navier boundary conditions \eqref{boundaryconditions} and \eqref{navierboundarycondition}.
\end{theorem}
\begin{proof}
We will prove the existence of Leray-Hopf weak solutions by employing the Galerkin method. We will denote the Leray projection by $\mathbb{P}$, which we recall is the $L^2$-orthogonal projection from $L^2 (\Omega)$ onto the space $H = \{ w \in L^2 (\Omega) \; \lvert \; \nabla \cdot w = 0, \; (w \cdot n) \lvert_{\partial \Omega} = 0 \}$. In the case of the generalised Navier boundary conditions \eqref{boundaryconditions} and \eqref{navierboundarycondition}, we recall that the Stokes operator $A$ is then defined as follows
\begin{equation}
A \coloneqq - \mathbb{P} \Delta,
\end{equation}
as a map from $W$ to $W'$ (where we recall that the space $W$ and its dual $W'$ were defined at the beginning of Section \ref{mainresultsection}). In particular, the Stokes operator is defined on $W$ by duality using the following relation (for $\psi_1, \psi_2 \in W$)
\begin{equation}
\langle A \psi_1, \psi_2 \rangle = (\psi_1, \psi_2)_W \coloneqq \int_\Omega D(\psi_1) : D(\psi_2) dx + \int_{\partial \Omega} (\lambda (\nu,x) \cdot \psi_1) \cdot \psi_2 dx. 
\end{equation}
By means of a generalisation of Proposition 3.2 in \cite{kelliher2025}, we know that the space $H$ has a basis $\{ w_j \}_{j=1}^\infty \subset W$ of eigenfunctions of the Stokes operator, which is orthonormal with respect to the $L^2 (\Omega)$ inner product. Moreover, the set $\{ w_j \}_{j=1}^\infty$ is also a basis for the space $W$ and is orthogonal with respect to the inner product $(\cdot, \cdot)_W$ on $W$, that was defined above. 

Now we define $P_N$ as the $L^2$-orthogonal projection of $H$ onto the space $H_N \coloneqq \Span \{w_1, \ldots, w_N \}$. We consider the following integral formulation of the order $N$ truncated Galerkin approximation system to equation \eqref{weakformulationnavierstokes}
\begin{align}
    &\int_{\Omega} \bigg[\partial_t u_\nu^N \cdot \Psi + \nabla \cdot (u_\nu^N \otimes u_\nu^N) \cdot \Psi + 2 \nu D (u_\nu^N) : \nabla \Psi  \bigg] dx \label{galerkinsystem} \\
    &+ 2 \nu \int_{\partial \Omega} (\lambda (\nu,x) \cdot u_\nu^N) \cdot \Psi dx = 0, \quad u_\nu^N \lvert_{t=0} = P_N u_{\nu,0} \nonumber
    \end{align}
    for all $\Psi \in H_N$. By the Picard-Lindelöf theorem, there exists a local-in-time solution to the Galerkin system \eqref{galerkinsystem}. By choosing $\Psi = u_\nu^N$ and using the fact that $\nabla \cdot u_\nu^N = 0$, we deduce the following energy equality
    \begin{equation} \label{lerayhopfestimate}
    \frac{1}{2} \frac{d}{d t} \lVert u_\nu^N (\cdot, t) \rVert_{L^2}^2 + 2 \nu \lVert D(u_\nu^N) (\cdot, t) \rVert_{L^2}^2 + 2 \nu \int_{\partial \Omega} \lambda (\nu,x) : (u_\nu^N \otimes u_\nu^N) dx = 0.
    \end{equation}
    Now, by using the fact that the matrix field $\lambda$ is nonnegative semidefinite, we conclude that the term $2 \int_{\partial \Omega} \lambda (\nu,x) : (u_\nu^N \otimes u_\nu^N) dx$ above is nonnegative and hence can be removed, which yields 
    \begin{equation} \label{lerayhopfinequality}
    \frac{1}{2} \frac{d}{d t} \lVert u_\nu^N (\cdot, t) \rVert_{L^2}^2 + 2 \nu \lVert D(u_\nu^N) (\cdot, t) \rVert_{L^2}^2 \leq 0.
    \end{equation}
    Next, we recall Korn's inequality (see for example Lemma IV.7.6 in \cite{boyer}), which is given by
    \begin{equation} \label{korninequality}
    \lVert \nabla v \rVert_{L^2} \leq C \lVert D(v) \rVert_{L^2} + C \lVert v \rVert_{L^2},
    \end{equation}
    for all $v \in H^1 (\Omega)$ and for some positive constant $C$. Therefore, by virtue of estimate \eqref{lerayhopfinequality} and inequality \eqref{korninequality} we get that
    \begin{equation} \label{galerkinestimate}
    \frac{1}{2} \frac{d}{d t} \lVert u_\nu^N (\cdot, t) \rVert_{L^2}^2 + \frac{2 \nu}{C} \lVert \nabla u_\nu^N (\cdot, t) \rVert_{L^2}^2 \leq 2\nu \lVert u_\nu^N (\cdot, t) \rVert_{L^2}^2. 
    \end{equation}
    We therefore obtain that the Galerkin approximations $u_\nu^N$ are global-in-time, since \eqref{galerkinestimate} implies that
    \begin{equation}
    \lVert u_\nu^N (\cdot, t) \rVert_{L^2}^2 \leq e^{4 \nu t} \lVert u_\nu^N (\cdot, 0) \rVert_{L^2}^2 = e^{4\nu t} \lVert u_{\nu,0}^N \rVert_{L^2}^2.
    \end{equation}
    Hence the $L^2 (\Omega)$-norm of $u_\nu^N$ is uniformly bounded on any finite time interval. Furthermore, from inequality \eqref{galerkinestimate} and the above estimate we deduce the following estimate (which is uniform in $N$) for a fixed $T > 0$
    \begin{equation} \label{lerayhopfbound}
    \lVert u_\nu^N (\cdot, T) \rVert_{L^2}^2 + \frac{4 \nu}{C} \int_0^T \lVert \nabla u_\nu^N (\cdot, t) \rVert_{L^2}^2 dt \leq \big(1 + e^{4 \nu T} \big) \lVert u_{\nu,0} \rVert_{L^2}^2.
    \end{equation}
    By then proceeding in a completely analogous fashion to \cite[p. 7--8]{kelliher2025}, from equation \eqref{lerayhopfbound} we obtain
    \begin{align}
    \lVert \partial_t u_\nu^N \rVert_{L^{4/3} ((0,T); W')} &\leq C (\nu \lVert u_\nu^N \rVert_{L^2 ((0,T); W)} + \lVert u_\nu^N \rVert_{L^2 ((0,T); W)}^{3/2} \lVert u_\nu^N \rVert_{L^\infty ((0,T); H)}^{1/2} \nonumber\\
    &+ \nu \lVert \lambda \rVert_{C^0 (\partial \Omega)} \lVert u_\nu^N \rVert_{L^2 ((0,T); W)}) \nonumber \\
    &\leq \nu C (1 + \lVert \lambda \rVert_{C^0 (\partial \Omega)}) \sqrt{\frac{C}{4 \nu}}  \big(1 + e^{2 \nu T} \big) \lVert u_{\nu,0} \rVert_{L^2} \nonumber \\
    &+ C \left(\frac{C}{4 \nu}\right)^{3/4} \big(1 + e^{4 \nu T} \big) \lVert u_{\nu,0} \rVert_{L^2}^2,
    \end{align}
    where the constant $C$ depends on $T$. Then by using the Banach-Alaoglu theorem and the Aubin-Lions lemma, we know that there exists a subsequence $\{ u_\nu^N \}_{N=1}^\infty$ (which we do not relabel) which converges weakly to $u$ in $L^2 ((0,T); H^1 (\Omega))$, and weakly-$*$ to $u$ in $L^\infty ((0,T); L^2 (\Omega))$ and strongly in $L^2 ((0,T); L^2 (\Omega))$ as $N \rightarrow \infty$. In particular, we obtain the strong convergence of the sequence $u_\nu^N (\cdot,t)$ to $u_\nu (\cdot, t)$ in $L^2 (\Omega)$ for almost every $t \in [0,T]$. In addition, we have $u \in C_w([0,T]; L^2 (\Omega))$. Moreover, we recall the trace inequality (see Theorem III.2.19 in \cite{boyer})
    \begin{equation}
    \lVert v \rVert_{L^2 (\partial \Omega)}^2 \lesssim \lVert v \rVert_{L^2 (\Omega)} \lVert v \rVert_{H^1 (\Omega)},
    \end{equation}
    for all $v \in H^1 (\Omega)$. It is then clear that the sequence $\{ u_\nu^N \}_{N=1}^\infty$ converges strongly to $u$ in $L^2 ((0,T); L^2 (\partial \Omega))$. It then follows in a straightforward manner that one can pass to the limit $N \rightarrow \infty$ in the following weak formulation (for divergence-free $\Psi \in \mathcal{D} ([0,T) \times \overline{\Omega} ; \mathbb{R}^3)$ with $(\Psi \cdot n) \lvert_{\partial \Omega} = 0$)
    \begin{align}
    &\int_0^T \int_{\Omega} \bigg[u_\nu^N \cdot \partial_t P_N \Psi + (u_\nu^N \otimes u_\nu^N) : P_N \nabla \Psi - 2 \nu D (u_\nu^N) : P_N \nabla \Psi  \bigg] dx dt \\
    &+ \int_{\Omega} u_{\nu,0}^N \cdot P_N \Psi (x,0) dx - 2 \nu \int_0^T \int_{\partial \Omega} (\lambda (\nu,x) \cdot u_\nu^N) \cdot P_N \Psi dx dt = 0, \nonumber
    \end{align}
    and obtain the validity of equation \eqref{weakformulationnavierstokes} in the limit $N \rightarrow \infty$.
    Subsequently, by integrating equation \eqref{lerayhopfestimate} in time, we find
    \begin{align}
    \frac{1}{2} \lVert u_\nu^N (\cdot, t) \rVert_{L^2}^2 &+ 2 \nu \int_0^t \lVert D(u_\nu^N) (\cdot, t') \rVert_{L^2}^2 dt' + 2 \nu \int_0^t \int_{\partial \Omega} \lambda (\nu,x) : u_\nu^N \otimes u_\nu^N dx dt' \nonumber \\
    &= \frac{1}{2} \lVert u_{\nu,0}^N \rVert_{L^2}^2. \label{galerkinenergyinequality}
    \end{align}
    Then finally, by using the aforementioned convergence results of the sequence $\{ u_\nu^N \}_{N=1}^\infty$ (in particular the weak-$*$ convergence in $L^\infty ((0,T);L^2 (\Omega))$ and the weak convergence in $L^2 ((0,T); H^1 (\Omega))$ together with the weak continuity of the limit), by taking the limit $N \rightarrow \infty$ in inequality \eqref{galerkinenergyinequality}, we obtain the energy inequality \eqref{lerayhopf2}, by following similar arguments as for the three-dimensional Navier-Stokes equations with Dirichlet boundary conditions (see, e.g., \cite{constantinbook}).
\end{proof}
\end{appendices}

\footnotesize
\bibliographystyle{acm}
\bibliography{main}
\end{document}